\newcommand*{\mailto}[1]{\href{mailto:#1}{#1}}
\numberwithin{equation}{section}
\newtheorem{theorem}[equation]{Theorem}
\newtheorem{lemma}[equation]{Lemma}
\newtheorem{proposition}[equation]{Proposition}
\newtheorem{corollary}[equation]{Corollary}
\theoremstyle{definition}
\theoremstyle{remark}
\newtheorem*{ack}{Acknowledgments}
\newcommand*{\N}{\mathbb{N}}
\newcommand*{\Z}{\mathbb{Z}}
\newcommand*{\R}{\mathbb{R}}
\newcommand*{\C}{\mathbb{C}}
\newcommand*{\dif}{\mathrm{d}}
\def\<{\left\langle}
\def\>{\right\rangle}
\newcommand{\DI}{\mathcal{I}}
\newcommand{\scale}{k}
\newcommand{\Scales}{\mathbf{S}}
\newcommand{\dm}{n} 
\begin{document}
\subjclass[2010]{42B20 (Primary) 47A07 (Secondary)}
\title{Cancellation for the simplex Hilbert transform}
\author{Pavel Zorin-Kranich}
\address{Universit\"at Bonn\\
  Mathematisches Institut\\
  Endenicher Allee 60\\
  53115 Bonn\\
  Germany
}
\email{\mailto{pzorin@uni-bonn.de}}
\urladdr{\url{http://www.math.uni-bonn.de/people/pzorin/}}
\begin{abstract}
We show that the truncated simplex Hilbert transform enjoys some cancellation in the sense that its norm grows sublinearly in the number of scales retained in the truncation.
This extends the recent result by Tao on cancellation for the multilinear Hilbert transform.
Our main tool is the Hilbert space regularity lemma due to Gowers, which enables a very short proof.
\end{abstract}
\maketitle
\allowdisplaybreaks[3]

\section{Introduction}
Consider the $(\dm+1)$-linear form defined on functions of $\dm$ variables
\begin{equation}
\label{eq:HHT-def-hom}
\Lambda_{K}(F_{0},\dots,F_{\dm})
:=
\int_{\R^{\dm+1}} \prod_{i=0}^{\dm}F_{i}(x_{(i)}) K(\sum_{i=0}^{\dm} x_{i}) \dif x,
\end{equation}
where $x_{(i)} = (x_{0},\dots,x_{i-1},x_{i+1},\dots,x_{\dm})$ denotes the omission of the $i$-th coordinate.
We are interested in $K$ being a smooth truncation of a one-dimensional Calder\'on--Zygmund kernel.
Recall that a one-dimensional Calder\'on--Zygmund kernel is a distribution $K$ that satisfies $\|\hat K\|_{\infty}\leq 1$ and that coincides, away from $0$, with a differentiable function $K$ such that $|K(t)|\leq|t|^{-1}$ and $|K'(t)|\leq |t|^{-2}$.
Our truncations have the form
\[
\psi_{\Scales} = \sum_{\scale\in \Scales} \psi_{\scale},
\quad\text{where}\quad
\psi_{\scale}(t)=\phi(2^{-\scale}t)K(t),
\]
$\Scales\subset\Z$ is an interval, and $\phi$ is an even, smooth function supported on $\pm[1,4]$ such that $\sum_{\scale\in\Z}\phi(2^{-\scale}t)=1$ for all $t\neq 0$.
We call the form $\Lambda_{\Scales} := \Lambda_{\psi_{\Scales}}$ the \emph{truncated simplex Hilbert transform}, in analogy to the truncated triangular Hilbert transform, to which this form specializes for $\dm=2$, $K(t)=\frac1t$.
The eponymous simplex is spanned by the set $\{0,\dots,\dm\}$.
Each function $F_{i}$ is associated to a side of the simplex and accepts the variables whose indices span that side.

Since $\|\psi_{\scale}\|_{1}=O(1)$ and by H\"older's inequality the estimate
\begin{equation}
\label{eq:trivial-estimate}
|\Lambda_{\Scales}(F_{0},\dots,F_{\dm})|
\lesssim_{\dm} |\Scales| \prod_{i=0}^{\dm} \| F_{i} \|_{p_{i}}
\end{equation}
is immediate for any Hölder tuple of exponents $1\leq p_{i}\leq\infty$.
Our main result, extending a recent result by Tao \cite{arxiv:1505.06479}, is the following qualitative improvement over this bound.
\begin{theorem}
\label{thm:tiny-gain}
Let $\dm\geq 1$.
Then for any $1<p_{i}<\infty$ with $\sum_{i=0}^{\dm} p_{i}^{-1}=1$ we have
\[
|\Lambda_{\Scales}(F_{0},\dots,F_{\dm})|
\leq
o_{\dm,p_{0},\dots,p_{\dm}}(|\Scales|) \prod_{i=0}^{\dm} \| F_{i} \|_{p_{i}}.
\]
\end{theorem}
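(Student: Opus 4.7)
My plan is to combine Tao's scale-cancellation heuristic with Gowers' Hilbert-space regularity lemma, which makes the structured/pseudorandom dichotomy very transparent. By multilinear interpolation against the trivial bound~\eqref{eq:trivial-estimate}, it suffices to prove the theorem for one well-chosen Hölder tuple; I fix $p_\dm = 2$ and choose the remaining $p_i$ so that the dual function defined below lies in $L^2$ uniformly in the scale. For each $\scale \in \Scales$ I set
\[
D_\scale(x_0,\dots,x_{\dm-1}) := \int_\R \prod_{i=0}^{\dm-1} F_i(x_{(i)})\,\psi_\scale\!\Bigl(\sum_{j=0}^\dm x_j\Bigr)\,dx_\dm,
\]
so that $\Lambda_\Scales(F_0,\dots,F_\dm) = \sum_{\scale\in\Scales}\<F_\dm, D_\scale\>$ and $\|D_\scale\|_2 \lesssim 1$ uniformly in $\scale$ by the single-scale multilinear Hölder estimate.

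Given $\epsilon > 0$, I apply Gowers' Hilbert-space regularity lemma to $F_\dm \in L^2$ with respect to the bounded family $\{D_\scale:\scale\in\Scales\}$, obtaining a decomposition
\[
F_\dm = F_\dm^{\mathrm{str}} + F_\dm^{\mathrm{sml}} + F_\dm^{\mathrm{psr}}
\]
with $\|F_\dm^{\mathrm{sml}}\|_2 \leq \epsilon$, $F_\dm^{\mathrm{str}} = \sum_{j=1}^M a_j D_{\scale_j}$ a bounded linear combination of $M = M(\epsilon)$ of the $D_\scale$'s (with $M$ independent of $|\Scales|$), and $\sup_\scale |\<F_\dm^{\mathrm{psr}}, D_\scale\>| \leq \epsilon$. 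Summing over $\scale\in\Scales$, the pseudorandom and small pieces each contribute $O(\epsilon|\Scales|)$ (the latter by Cauchy--Schwarz on each term). The structured piece contributes $\sum_{j=1}^M a_j\<D_{\scale_j}, D_\Scales\>$ with $D_\Scales := \sum_\scale D_\scale$, which is $\leq O_\epsilon(1)\cdot\|D_\Scales\|_2$ by Cauchy--Schwarz. The remaining input is the Bessel-type bound $\|D_\Scales\|_2 \lesssim \sqrt{|\Scales|}$: expanding $\|D_\Scales\|_2^2 = \sum_{\scale,\scale'}\<D_\scale, D_{\scale'}\>$, the diagonal contributes $O(|\Scales|)$, and the off-diagonal terms are controlled via decay of $|\<D_\scale, D_{\scale'}\>|$ in $|\scale-\scale'|$, building on the physical disjointness $\psi_\scale\psi_{\scale'}\equiv 0$ for $|\scale-\scale'|\geq 3$. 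Combining,
\[
|\Lambda_\Scales| \lesssim \epsilon|\Scales| + C_\epsilon\sqrt{|\Scales|},
\]
and since $\epsilon > 0$ was arbitrary, $\Lambda_\Scales = o(|\Scales|)$.

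\textbf{Main obstacle.} I expect the hard part to be the Bessel inequality $\|D_\Scales\|_2 \lesssim \sqrt{|\Scales|}$. The off-diagonal decay of $|\<D_\scale, D_{\scale'}\>|$ in $|\scale-\scale'|$ is not an immediate consequence of the physical disjointness of $\psi_\scale$ and $\psi_{\scale'}$, since these kernels appear inside different integrations in the definitions of $D_\scale$ and $D_{\scale'}$. Propagating the disjointness through the extra multilinear integration is the main technical content, and requires the full Calderón--Zygmund regularity of the kernels rather than only their $L^1$ bounds.
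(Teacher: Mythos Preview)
Your reduction to the Bessel bound $\|D_{\Scales}\|_{2}\lesssim\sqrt{|\Scales|}$ is a genuine gap, and in fact this bound is strictly stronger than the theorem you are trying to prove. By duality, $\|D_{\Scales}\|_{2}=\sup_{\|G\|_{2}=1}\Lambda_{\Scales}(F_{0},\dots,F_{\dm-1},G)$, so your Bessel inequality is equivalent to the power-saving estimate $|\Lambda_{\Scales}|\lesssim |\Scales|^{1/2}\prod_{i}\|F_{i}\|_{p_{i}}$ on the slice $p_{\dm}=2$. For $\dm\geq 2$ no such power saving is known; obtaining one would be a major step toward full boundedness of the simplex Hilbert transform, which the paper explicitly says is out of reach.

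The mechanism you propose for off-diagonal decay cannot work without additional structure. Expanding $\langle D_{\scale},D_{\scale'}\rangle$ introduces two copies $s,t$ of the variable $x_{\dm}$, and $\psi_{\scale}$, $\psi_{\scale'}$ are evaluated at arguments differing by the free parameter $s-t$; the physical disjointness of $\psi_{\scale}$ and $\psi_{\scale'}$ is therefore irrelevant. What remains is a $(\dm+2)$-dimensional integral involving the products $F_{i}(\cdot,s)\overline{F_{i}(\cdot,t)}$, which is itself a simplex-type form of the same or higher complexity. Since the $F_{i}$ carry no smoothness, Calder\'on--Zygmund regularity of the kernel alone cannot produce cancellation here; you are going in circles. (A minor side issue: Gowers' lemma bounds $\sum_{j}|a_{j}|$, not the number $M$ of terms, though your Cauchy--Schwarz step only needs the former.)

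The paper sidesteps this by choosing a different set $\Sigma$: not the scale-dual functions $D_{\scale}$, but products $\prod_{A\subsetneq\{0,\dots,\dm-1\}}f_{A}(x|_{A})$ of functions depending on proper subsets of the variables. With this choice the structured piece, when substituted back into $\Lambda_{\Scales}$, collapses one variable and becomes a simplex Hilbert transform in $\dm-1$ variables, to which the inductive hypothesis applies; the uniform piece is handled scale by scale via the Fourier expansion of $\psi_{\scale}$. The regularity lemma is applied only after localizing to a single dyadic tree (so that one works in $L^{2}$ of a bounded cube with $L^{\infty}$ functions), and a separate tree-selection argument sums the local gains. The essential idea you are missing is that the ``structure'' in Gowers' decomposition must be chosen to reduce the dimension of the problem, not to reproduce the dual of the operator itself.
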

The restriction to Hölder tuples of exponents is necessary; more in general, if the kernel $K$ is homogeneous of degree $d$, then the form \eqref{eq:HHT-def-hom} can only be bounded on $L^{p_{0}} \times \dots \times L^{p_{\dm}}$ if $\sum_{i=0}^{\dm}p_{i}^{-1}=1+(1+d)/\dm$.
Uniform bounds in $\Scales$ remain out of reach of current methods unless $\dm=1$, in which case $\Lambda_{\Scales}$ reduces to (the dual of) a usual truncation of $K$.

An appropriate choice of the functions $F_{i}$ (see Appendix~\ref{sec:max-mod}) shows that Theorem~\ref{thm:tiny-gain} yields some cancellation for the maximally modulated by polynomials of degree $d$, linearized, $(\dm-d)$-linear, truncated Calder\'on--Zygmund operator
\begin{equation}
\label{eq:max-mod-MHT}
C_{N_1,\ldots,N_{d}}(f_{d+1},\dots,f_{\dm})(x)
=
\mathrm{p.v.}\int_{\mathbb{R}} e^{i\sum_{j=1}^{d} N_j(x)t^{j}} \prod_{j=d+1}^{\dm} f_{j}(x-b_{j}t) \psi_{\Scales}(t) \dif t,
\end{equation}
where $b_{d+1},\dots,b_{\dm}$ are distinct non-zero real numbers and $N_{1},\dots,N_{d}:\R\to\R$ are measurable linearizing functions, with constants uniform in the choice of $b_{j}$'s and $N_{j}$'s.
This answers a question from \cite{arxiv:1505.06479}, where such cancellation was obtained for $d=0$ and commensurable $b_{j}$'s.

Our main tool is the Hilbert space regularity lemma due to Gowers \cite{MR2669681}, whose proof is so short that we chose to write it out in full in Section~\ref{sec:regularity}.
The multidimensional setup allows us to avoid the much harder arithmetic regularity lemma used in \cite{arxiv:1505.06479}.

Theorem~\ref{thm:tiny-gain} is proved by induction on $\dm$.
The case $\dm=1$ follows from the standard theory of truncated Calder\'on--Zygmund operators, see e.g.\ \cite[\textsection I.7]{MR1232192}.
In the inductive step we assume that the theorem holds with $\dm>1$ replaced by $\dm-1$.
Multilinear interpolation with the trivial estimate \eqref{eq:trivial-estimate} shows that it suffices to consider $p_{0},\dots,p_{\dm}>\dm$ and indicator functions $F_{i}=1_{E_{i}}$.
We make these assumptions throughout Section~\ref{sec:tree}, which contains a single tree estimate, and Section~\ref{sec:selection}, which describes a standard tree selection algorithm.

\section{The regularity lemma}
\label{sec:regularity}
The material in this section is almost identical to Gowers's original exposition in \cite{MR2669681}.
The only difference from the finite-dimensional case is that it turns out convenient to work with \emph{extended seminorms}, that is, functions $\|\cdot\|$ on a vector space $H$ taking values in the extended positive reals $[0,+\infty]$ that are subadditive, homogeneous, and map $0$ to $0$ (this observation has peen previously used to further streamline \cite{arxiv:1111.7292} Walsh's proof of the multilinear mean ergodic theorem \cite{MR2912715}).
The reason is that the atomic seminorms $\|\cdot\|_{\Sigma}$, defined below, are typically extended.
\begin{lemma}
\label{lem:Sigma-ext-seminorm}
Let $H$ be a Hilbert space and $\Sigma\subset H$.
Then the formula
\[
\|f\|_{\Sigma} := \inf\Big\{ \sum_{t}|\lambda_{t}| : f=\sum_{t}\lambda_{t}\sigma_{t}, \sigma_{t} \in \Sigma \Big\},
\]
where sums are finite (possibly empty), and the infimum of an empty set is by convention $+\infty$, defines an extended seminorm on $H$ whose dual extended seminorm is given by
\[
\|f\|_{\Sigma}^{*} := \sup_{\phi\in H: \|\phi\|_{\Sigma}\leq 1} |\<f,\phi\>| = \sup_{\sigma\in\Sigma}|\<f,\sigma\>|.
\]
\end{lemma}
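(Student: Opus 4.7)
The plan is to verify everything directly from the definitions: the Hilbert space inner product enters only through bilinearity, and the argument is the standard one realizing an atomic seminorm as an $\ell^{1}$-type infimum and its dual as a supremum over the atoms themselves.

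First, I would check that $\|\cdot\|_{\Sigma}$ is an extended seminorm. The empty sum represents $0$ with total weight $0$, giving $\|0\|_{\Sigma}\leq 0$. Positive homogeneity follows from the bijection between representations $f=\sum_{t}\lambda_{t}\sigma_{t}$ and $cf=\sum_{t}(c\lambda_{t})\sigma_{t}$ for $c\neq 0$, the case $c=0$ reducing to the previous one. Subadditivity is obtained by concatenating near-optimal representations of $f$ and $g$ into a single representation of $f+g$ with combined $\ell^{1}$-weight at most $\|f\|_{\Sigma}+\|g\|_{\Sigma}+2\epsilon$, and then letting $\epsilon\to 0$. All three assertions are vacuous or immediate whenever one of the norms involved equals $+\infty$.

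For the dual formula, the first equality in the display is the definition of the dual extended seminorm. The second equality is proved by two inequalities. The bound $\sup_{\sigma\in\Sigma}|\<f,\sigma\>|\leq \|f\|_{\Sigma}^{*}$ holds because every $\sigma\in\Sigma$ satisfies $\|\sigma\|_{\Sigma}\leq 1$ via the one-term representation $\sigma=1\cdot\sigma$. For the reverse, fix $\phi$ with $\|\phi\|_{\Sigma}\leq 1$ and, for any $\epsilon>0$, pick a representation $\phi=\sum_{t}\lambda_{t}\sigma_{t}$ with $\sum_{t}|\lambda_{t}|\leq 1+\epsilon$; bilinearity of the inner product and the triangle inequality then give
\[
|\<f,\phi\>| \leq \sum_{t} |\lambda_{t}|\,|\<f,\sigma_{t}\>| \leq (1+\epsilon)\sup_{\sigma\in\Sigma}|\<f,\sigma\>|,
\]
and sending $\epsilon\to 0$ followed by taking the supremum over admissible $\phi$ concludes. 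There is no substantive obstacle here; the only minor care needed is tracking the convention $\inf\emptyset=+\infty$, which is absorbed painlessly into the extended-valued framework since each identity reduces to a trivial one whenever an infinite value appears.
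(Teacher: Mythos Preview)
Your argument is correct and complete; the verification of the seminorm axioms and the two-sided estimate for the dual are exactly the standard ones. Note, however, that the paper does not actually supply a proof of this lemma---it is stated and then immediately followed by Theorem~\ref{thm:structure}, the result being treated as routine. So there is nothing to compare against: you have filled in what the paper leaves to the reader, and done so in the expected way.
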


Gowers's Hilbert space regularity lemma reads as follows.
\begin{theorem}
\label{thm:structure}
Let $\delta>0$ and $\eta \colon\R_+\to\R_+$ be any function.
Let $H$ be a Hilbert space with norm $\|\cdot\|_{H}$ and let $\|\cdot\|$ be an arbitrary further extended seminorm on $H$.
Then for every $f\in H$ with $\|f\|_{H} \leq 1$ there exists $C=O_{\delta,\eta}(1)$ and a decomposition
\begin{equation}
\label{eq:decomposition}
f = \sigma + u + v
\end{equation}
such that
\begin{equation}
\|\sigma\| < C, \quad
\|u\|^* < \eta(C), \quad\text{and}\quad
\|v\|_{H} < \delta.
\end{equation}
\end{theorem}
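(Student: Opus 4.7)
The plan is to follow the standard energy-increment iteration. Starting from $\sigma_0 = 0$ and $r_0 = f$, maintain a decomposition $f = \sigma_n + r_n$ together with a running upper bound $C_n$ for $\|\sigma_n\|$ (with $C_0 = 0$). At each step, attempt to halt: if $\|r_n\|_H < \delta$, output $\sigma = \sigma_n$, $u = 0$, $v = r_n$; if $\|r_n\|^* < \eta(C_n)$, output $\sigma = \sigma_n$, $u = r_n$, $v = 0$. Otherwise the definition of $\|\cdot\|^*$ supplies $\phi_n \in H$ with $\|\phi_n\| \leq 1$ and, after a sign flip, $\<r_n, \phi_n\> \geq \eta(C_n)$.

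The update is the $\phi_n$-direction orthogonal projection: set $c_n := \<r_n, \phi_n\>/\|\phi_n\|_H^2$, $\sigma_{n+1} := \sigma_n + c_n\phi_n$, $r_{n+1} := r_n - c_n\phi_n$, and $C_{n+1} := C_n + c_n$. Subadditivity of $\|\cdot\|$ together with $\|\phi_n\| \leq 1$ preserves $\|\sigma_{n+1}\| \leq C_{n+1}$, and a direct Pythagorean computation yields
\[
\|r_n\|_H^2 - \|r_{n+1}\|_H^2
= \frac{\<r_n, \phi_n\>^2}{\|\phi_n\|_H^2}
= c_n\<r_n,\phi_n\>
\geq c_n\,\eta(C_n).
\]
Telescoping against $\|f\|_H \leq 1$ gives $\sum_n c_n\,\eta(C_n) \leq 1$.

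To convert this into a uniform bound $C = O_{\delta,\eta}(1)$ on the stopping time, I would (without loss of generality replacing $\eta$ by its non-increasing envelope) run the iteration against thresholds $\eta(a_{j(n)})$ on a rapidly growing sequence $0 = a_0 < a_1 < \cdots$ chosen in terms of $\delta$ and $\eta$, where $j(n)$ is the least index with $C_n \leq a_{j(n)}$. A pigeonhole on the ``stage'' $j(n)$ then forces termination: completing stage $j$ consumes at least $\eta(a_j)(a_j - a_{j-1})$ of the energy budget $\|f\|_H^2 \leq 1$, so the iteration must halt by some stage $J = J(\delta,\eta)$, producing $C \leq a_J$.

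The main obstacle I anticipate is exactly this termination bookkeeping. Since there is no a priori control on $\|\phi_n\|_H$, individual increments $c_n$ may be arbitrarily small and the useful dissipation estimate is only aggregate over a stage; the sequence $a_j$ must be tuned carefully so that each completed stage dissipates a definite portion of the budget. The Hilbert-space orthogonality used in the update is essential here, as it cleanly links $c_n$ to the energy decrement without any reference to $\|\phi_n\|_H$.
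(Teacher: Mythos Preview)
Your energy-increment scheme is a genuinely different route from the paper's, and it has a real gap at exactly the point you flag. Your staging bounds the number of \emph{completed} stages, but it does not force termination \emph{within} a stage. The per-step dissipation is $\<r_n,\phi_n\>^2/\|\phi_n\|_H^2$, and the hypothesis $\|\phi_n\|\leq 1$ places no bound whatsoever on $\|\phi_n\|_H$ for an arbitrary extended seminorm. Thus the witnesses $\phi_n$ may be chosen (or may only exist) with $\|\phi_n\|_H$ so large that the dissipation at step $n$ is as small as one likes, while $\<r_n,\phi_n\>\geq\eta(a_j)$ still holds. Your aggregate inequality $\sum_n c_n\,\eta(a_{j(n)})\leq 1$ then only says $C_n$ stays bounded; it does not force $\|r_n\|_H<\delta$ or $\|r_n\|^*<\eta(a_j)$. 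A toy instance: take $H=\ell^2$ and $\|x\|=|x_1|$. Then $\|r\|^*=\infty$ whenever $r$ has any nonzero coordinate beyond the first, and for any $r$ with $r_1=0$ one can pick $\phi = (\eta/\|r\|_H^2)\,r + M e_k$ with $k$ fresh, so $\|\phi\|=0$, $\<r,\phi\>=\eta$, and dissipation $\eta^2/(\eta^2/\|r\|_H^2+M^2)\to 0$. The iteration can then run forever with $\|r_n\|_H$ bounded away from $0$. Your proposal does not specify a rule for selecting $\phi_n$ that would rule this out, and it is not clear one exists in this generality.

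The paper avoids the issue by abandoning iteration altogether. It argues by contradiction: fix $r\approx 2\delta^{-2}$, define $C_r=1$ and $C_{i-1}=\max\{C_i,\,2/\eta(C_i)\}$, and suppose the decomposition fails for every $C_i$. Then $f$ lies outside the open convex set $C_iV_1+\eta(C_i)V_2+\delta V_3$ (the $V_j$ being the open unit balls of $\|\cdot\|$, $\|\cdot\|^*$, $\|\cdot\|_H$), and Hahn--Banach separation produces $\phi_i\in H$ with $\<\phi_i,f\>\geq 1$, $\|\phi_i\|^*\leq C_i^{-1}$, $\|\phi_i\|^{**}\leq\eta(C_i)^{-1}$, and crucially $\|\phi_i\|_H\leq\delta^{-1}$. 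The recursion on the $C_i$ forces $|\<\phi_i,\phi_j\>|\leq\tfrac12$ for $i<j$, and the resulting almost-orthogonality contradicts $\|\sum\phi_i\|_H\leq r^{1/2}\delta^{-1}+O(r)$ versus $\<\sum\phi_i,f\>\geq r$. The point is that separation automatically delivers the bound $\|\phi_i\|_H\leq\delta^{-1}$ that your greedy selection cannot guarantee. Your argument \emph{does} go through if one additionally assumes the $\|\cdot\|$-unit ball is bounded in $H$ (as is the case for the atomic norm $\|\cdot\|_\Sigma$ actually used later, since dual functions are uniformly bounded on a set of finite measure), but not for the theorem as stated.
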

The proof uses the following separation lemma.
\begin{lemma}
\label{lem:sep}
Let $V_{i}$, $i=1,\dots,k$, be convex subsets of a Hilbert space $H$, at least one of which is open, and each of which contains $0$.
Let $V:=c_{1}V_{1}+\dots+c_{k}V_{k}$ with $c_{i}>0$ and take $f\not\in V$.
Then there exists a vector $\phi\in H$ such that $\<f,\phi\> \geq 1$ and $\Re\<v,\phi\> < c_{i}^{-1}$ for every $v\in V_{i}$ and every $i$.
\end{lemma}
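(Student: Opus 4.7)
The plan is to apply the geometric Hahn--Banach separation theorem to $f$ and the Minkowski sum $V$, and then convert the resulting $\R$-linear separating functional into an inner product via Riesz representation on the real Hilbert space underlying $H$.

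The first step is to verify that $V=c_{1}V_{1}+\dots+c_{k}V_{k}$ is convex (immediate, since a Minkowski combination of convex sets is convex), contains $0$ (each $V_{i}$ does), and is open. For openness I would use that if $V_{j}$ is open, then so is $c_{j}V_{j}$, and writing $V=c_{j}V_{j}+W$ with $W=\sum_{i\neq j}c_{i}V_{i}$ exhibits $V$ as a union of translates of an open set. Since $f\notin V$, Hahn--Banach then produces a nonzero continuous $\R$-linear functional $L\colon H\to\R$ and a real $\alpha$ with $L(v)<\alpha\leq L(f)$ for every $v\in V$. Because $0\in V$ we get $\alpha>0$, so after rescaling $L$ I may assume $\alpha=1$. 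Riesz representation yields $\phi\in H$ such that $L(\cdot)=\Re\<\cdot,\phi\>$, giving $\Re\<f,\phi\>\geq 1$ and $\Re\<v,\phi\><1$ for all $v\in V$.

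To obtain the per-set inequality, I would fix $v_{i}\in V_{i}$ and exploit that $0\in V_{j}$ for every $j\neq i$: the decomposition $c_{i}v_{i}=c_{1}\cdot 0+\dots+c_{i}v_{i}+\dots+c_{k}\cdot 0$ shows that $c_{i}v_{i}\in V$, whence $\Re\<c_{i}v_{i},\phi\><1$, which rearranges to $\Re\<v_{i},\phi\><c_{i}^{-1}$ as required. The conclusion $\<f,\phi\>\geq 1$ is read as the real-part inequality $\Re\<f,\phi\>\geq 1$, which is the natural convention in a complex Hilbert space when only real-linear separation is available.

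No part of the argument is delicate; the only small bookkeeping point is confirming that a single open summand forces the whole Minkowski sum to be open, and the only conceptual choice is to work with the real-linear structure of $H$ so that Hahn--Banach applies directly. The rest is a routine application of separation followed by the observation that each $c_{i}V_{i}$ embeds into $V$ via the zero vectors in the remaining summands.
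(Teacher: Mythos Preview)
Your proposal is correct and follows essentially the same approach as the paper: apply Hahn--Banach separation to the open convex set $V$ and the point $f\notin V$, then deduce the per-summand bounds from $c_{i}V_{i}\subset V$ via the zero vectors in the other summands. The paper's proof is terser (it asserts openness and convexity of $V$ without comment and compresses your last step into ``the claim follows''), but the argument is the same; your added remarks on openness of the Minkowski sum and on the real-linear Riesz representation simply fill in details the paper leaves implicit.
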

\begin{proof}
By the assumption the set $V$ is open, convex and does not contain $f$.
By the Hahn--Banach theorem there exists a $\phi\in H$ such that $\<f,\phi\> \geq 1$ and $\Re\<v,\phi\> < 1$ for every $v\in V$.
The claim follows.
\end{proof}
There is also a constructive version of Lemma~\ref{lem:sep} with an $\epsilon$ loss, in the sense that the conclusion changes to $\Re\<v,\phi\> < (1+\epsilon)c_{i}^{-1}$ (this version still suffices for our purpose).
Indeed, since $V\ni 0$ is open and $f\not\in V$, we have $f\not\in(1+\epsilon)^{-1}\overline{V}$.
Let $g\in(1+\epsilon)^{-1}\overline{V}$ be the element that minimizes the distance from $f$ (such $g$ is unique).
One can then take $\phi = (f-g)/\<f-g,f\>$.

\begin{proof}[Proof of Theorem~\ref{thm:structure}]
Let $r$ be chosen later (depending only on $\delta$) and define
\begin{equation}
\label{eq:C}
C_{r} = 1,
\qquad
C_{i-1} = \max \Big\{ C_{i}, \frac{2}{\eta(C_{i})} \Big\}.
\end{equation}
Let $V_{1},V_{2},V_{3}$ be the open unit balls of $\|\cdot\|$, $\|\cdot\|^{*}$, and $\|\cdot\|_{H}$, respectively.
Suppose that the conclusion fails, then for every $i \in \{1,\dots,r\}$ we have
\[
f \not\in C_{i}V_{1} + \eta(C_{i})V_{2} + \delta V_{3}.
\]
Since $V_{3}$ is open in $H$, Lemma~\ref{lem:sep} applies, and we obtain vectors $\phi_{i} \in H$ such that
\[
\<\phi_{i},f\> \geq 1,
\quad \|\phi_{i}\|^{*} \leq (C_{i})^{-1},
\quad \|\phi_{i}\|^{**} \leq \eta(C_{i})^{-1},
\quad \|\phi_{i}\| \leq \delta^{-1}.
\]
For every pair $i<j$ by \eqref{eq:C} we have
\[
|\<\phi_{i},\phi_{j}\>|
\leq \|\phi_{i}\|^{*} \|\phi_{j}\|^{**}\\
\leq (C_{i})^{-1} \eta(C_{j})^{-1}
\leq (C_{j-1})^{-1} \eta(C_{j})^{-1}
\leq \frac12,
\]
so that
\[
r^{2} \leq \<\phi_{1}+\dots+\phi_{r},f\>^{2}
\leq \|\phi_{1}+\dots+\phi_{r}\|^{2}
\leq r \delta^{-2} + \frac{r^{2}-r}{2},
\]
which is a contradiction if $r \geq 2 \delta^{-2}$.
\end{proof}

\section{The tree estimate}
\label{sec:tree}
For each $\scale\in\Z$ let $\DI_{\scale}$ be the collection of the dyadic cubes $I\subset\R^{\dm+1}$ of the form
\[
I=I_{0}\times\dots\times I_{\dm}=2^{\scale}(m_{0},\dots,m_{\dm}) + [0,2^{\scale}]^{\dm},
\qquad
\sum_{i}m_{i}=0.
\]
The \emph{scale} of a dyadic cube $I\in\DI_{\scale}$ is defined as $s(I):=\scale$.
Let also $\DI_{\Scales} := \cup_{\scale\in \Scales} \DI_{\scale}$ and $\DI:=\DI_{\Z}$.
This gives the splitting
\begin{equation}
\label{eq:tile-decomposition}
\Lambda_{\Scales}(F_{0},\dots,F_{\dm})
=
\sum_{I\in\DI_{\Scales}} \Lambda_{I}(F_{0},\dots,F_{\dm}),
\end{equation}
where for each $I\in\DI_{\scale}$ we have set
\[
\Lambda_{I}(F_{0},\dots,F_{\dm})
:=
\int_{\R^{\dm+1}} \prod_{i=0}^{\dm}F_{i}(x_{(i)}) \psi_{\scale}(\sum x) \prod_{i=0}^{\dm-1} 1_{I_{i}}(x_{i}) \dif x.
\]
Contrary to what could be expected, our argument would not benefit from using smoother versions of the cutoffs $1_{I_{i}}$.
However, this appears to be a limitation rather than a strength of our approach.

We write elements of $\R^{\dm+1}$ as $x=(x',x_{\dm}) \in \R^{\dm}\times\R$ and dyadic cubes $I\in\DI$ as $I'\times I_{\dm}$, where $I'$ is a dyadic cube in $\R^{\dm}$ and $I_{\dm}$ is a dyadic interval in $\R$.
A \emph{tree} with top $J\in\DI$ is a collection of boxes $I\in\DI$ such that $I'\subset J'$.
In this section we obtain a gain over the trivial bound (coming from Fubini's theorem) for the restriction of the sum \eqref{eq:tile-decomposition} to a tree.
\begin{proposition}
\label{prop:single-tree}
For every $\delta>0$ there exists $S_{\delta,\dm}\in\N$ such that
for any functions $F_{0},\dots,F_{\dm}:\R^{\dm}\to [0,1]$ and for every dyadic cube $J\in\DI$ there exists an interval of scales $\Scales'\subset\Z$ with $|\Scales'| \leq S_{\delta,\dm}$ and $\max \Scales'=s(J)$ such that
\[
\big| \sum_{\scale\in \Scales'} \sum_{I\in\DI_{\scale}: I'\subset J'} \Lambda_{I}(F_{0},\dots,F_{\dm}) \big|
\lesssim_{\dm}
\delta 2^{\dm s(J)} |\Scales'|.
\]
\end{proposition}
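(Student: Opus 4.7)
The idea is to apply Gowers's regularity lemma to $F_\dm$ and use the inductive hypothesis at dimension $\dm-1$ for the structured part.

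First, collapsing the double sum using that the scale-$\scale$ cubes $I'\subset J'$ partition $J'$ gives
\[
\sum_{\scale\in\Scales'}\sum_{\substack{I\in\DI_\scale\\I'\subset J'}}\Lambda_I(F_0,\dots,F_\dm)
=\langle F_\dm\cdot 1_{J'},H\rangle,
\]
where $\psi_{\Scales'}:=\sum_{\scale\in\Scales'}\psi_\scale$ and $H(x')=1_{J'}(x')\int\prod_{i<\dm}F_i(x'_{(i)},x_\dm)\psi_{\Scales'}(|x'|+x_\dm)\,\dif x_\dm$. I take $\Scales'=\{s(J)-L+1,\dots,s(J)\}$ with $L=S_{\delta,\dm}$ to be chosen, and use the trivial bound $\|H\|_\infty\lesssim|\Scales'|$.

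Apply Theorem~\ref{thm:structure} to the normalized $F_\dm\cdot 1_{J'}/|J'|^{1/2}$ in $L^2(\R^\dm)$, with atomic set $\Sigma$ consisting of rank-one tensors (for instance, tensor-product indicators $\prod_{i<\dm}1_{A_i}(x_i)$ suitably normalized) and with $\eta(C)\equiv\delta$. This produces $F_\dm\cdot 1_{J'}=|J'|^{1/2}(\sigma+u+v)$ with $\|\sigma\|_\Sigma<C_\delta$, $\|u\|_\Sigma^*<\delta$, $\|v\|_2<\delta$. The $v$-contribution is $\le|J'|^{1/2}\|v\|_2\|H\|_2\le\delta\cdot 2^{\dm s(J)}|\Scales'|$; the $u$-contribution is $\le|J'|^{1/2}\|u\|_\Sigma^*\|H\|_\Sigma$, and I would verify $\|H\|_\Sigma\lesssim|J'|^{1/2}|\Scales'|$ by decomposing $H$ using the scale structure of $\psi_{\Scales'}$ together with the indicator form of the $F_i$. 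The design of $\Sigma$ so that this decomposition is available is one technical point.

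The main obstacle is the structured piece $\sigma=\sum_t\lambda_t\sigma_t$. For each atom $\sigma_t=\prod_{i<\dm}1_{A_{t,i}}(x_i)$, substituting $F_\dm\leftarrow\sigma_t$ and distributing each factor $1_{A_{t,i}}(x_i)$ into some $F_j$ with $j\neq i$, $j<\dm$, eliminates the $F_\dm$ slot. Fixing $x_\dm=y$ as a parameter, the resulting integral over $x'\in\R^\dm$ is, after an elementary translation $x_0\mapsto x_0-y$ absorbing the $y$-shift of the kernel into a redefinition of the remaining functions, a truncated $(\dm-1)$-dimensional simplex Hilbert form in $\dm$ bounded functions of $\dm-1$ variables each with the same kernel $\psi_{\Scales'}$. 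Choosing any Hölder tuple $p_0,\dots,p_{\dm-1}>\dm-1$ with $\sum p_j^{-1}=1$, the inductive hypothesis (Theorem~\ref{thm:tiny-gain} at dimension $\dm-1$) bounds this uniformly in $y$ by $o_{\dm-1}(|\Scales'|)\prod_{j<\dm}|J'_{(j)}|^{1/p_j}=o_{\dm-1}(|\Scales'|)\cdot 2^{(\dm-1)s(J)}$. Integrating in $y$ over the interval of length $O(2^{s(J)})$ imposed by the support of $\psi_{\Scales'}$ then yields $|\Lambda_{J,\Scales'}(F_0,\dots,F_{\dm-1},\sigma_t)|\lesssim o_{\dm-1}(|\Scales'|)\cdot 2^{\dm s(J)}$.

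Summing over atoms, the $\sigma$-contribution is bounded by $C_\delta\cdot o_{\dm-1}(|\Scales'|)\cdot 2^{\dm s(J)}$. Combining the three bounds gives $|\Lambda_{J,\Scales'}|\lesssim\delta\cdot 2^{\dm s(J)}|\Scales'|+C_\delta\cdot o_{\dm-1}(|\Scales'|)\cdot 2^{\dm s(J)}$, and since $o_{\dm-1}(L)/L\to 0$ by the inductive hypothesis, choosing $S_{\delta,\dm}$ large enough that $C_\delta\cdot o_{\dm-1}(L)\le\delta L$ whenever $L\ge S_{\delta,\dm}$ closes the proof with $|\Scales'|=S_{\delta,\dm}$.
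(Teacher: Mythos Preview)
Your overall strategy—apply the regularity lemma to $F_{\dm}$ and use the $(\dm-1)$-dimensional case on the structured piece—is exactly the paper's, but there is a genuine gap in the treatment of the uniform term, and it propagates to your choices of $\Sigma$, $\eta$, and $|\Scales'|$.

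The assertion $\|H\|_{\Sigma}\lesssim |J'|^{1/2}|\Scales'|$ is unjustified and, with any of the natural atomic sets, false with a linear dependence on $|\Scales'|$. The only available decomposition of the kernel goes through Fourier inversion,
\[
\psi_{\scale}\Big(\sum_i x_i\Big)=\int_{\R}\hat\psi_{\scale}(\xi)\,\prod_{i}e(\xi x_i)\,\dif\xi,
\]
and absorbing the one-variable factors $e(\xi x_i)$ into the $F_j$'s (with $j\neq i,\dm$, which requires $\dm>1$) exhibits the integrand, for each fixed $(\xi,x_{\dm})$, as a pairing of $F_{\dm}$ with a function of the form $\prod_{i<\dm} g_i(x'_{(i)})$, $|g_i|\le 1$. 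These ``dual functions'' are the paper's $\Sigma$; they are \emph{not} rank-one tensors, and with your smaller $\Sigma$ the atomic norm of $H$ is only larger. Even with the paper's $\Sigma$, integrating in $\xi$ costs $\|\hat\psi_{\scale}\|_{1}\sim 2^{-\scale}$, and summing over $\scale\in\Scales'$ with $\max\Scales'=s(J)$ gives an \emph{exponential} factor $2^{|\Scales'|}$, so the uniform contribution is $\lesssim 2^{|\Scales'|}\,\|u\|_{\Sigma}^{*}$, not $|\Scales'|\,\|u\|_{\Sigma}^{*}$.

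This breaks the endgame. With $\eta(C)\equiv\delta$ and $|\Scales'|=L$ chosen large enough to make $C_\delta\cdot o_{\dm-1}(L)\le\delta L$, the uniform term $2^{L}\delta$ blows up. Attempting instead to fix $L$ first and set $\eta\equiv\delta L\,2^{-L}$ makes $C=O_{\delta,\eta}(1)$ grow at least like $2^{L}$ (trace the recursion in the regularity lemma), so the structured bound $C\cdot o_{\dm-1}(L)\le\delta L$ would demand exponential decay of $o_{\dm-1}$, which the inductive hypothesis does not provide. The paper's resolution is to let $\eta$ be a genuine function of its argument and to choose $|\Scales'|=\tilde S_{\delta,\dm}(C)$ \emph{after} the regularity lemma outputs $C$; this is precisely why the proposition promises only $|\Scales'|\le S_{\delta,\dm}$ rather than $|\Scales'|=S_{\delta,\dm}$. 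Your structured-term reduction is fine once $\Sigma$ is taken to be dual functions, but the calibration of $\eta$ and $|\Scales'|$ against the exponential uniform loss is the missing idea.
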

Note that $\Scales'$ depends both on the (bounded) functions $F_{i}$ and the dyadic square $J$, but $S_{\delta,\dm}$ does not.

\begin{proof}[Proof of Proposition~\ref{prop:single-tree}]
By scaling we may assume $s(J)=0$.
Note that
\begin{equation}
\label{eq:tree-single-scale}
\sum_{I\in\DI_{\scale} : I'\subset J'}\Lambda_{I}(F_{0},\dots,F_{\dm})
=
\int_{\R^{\dm+1}} \prod_{i=0}^{\dm} F_{i}(x_{(i)}) \psi_{\scale}(\sum x) \prod_{i=0}^{\dm-1} 1_{J_{i}}(x_{i}) \dif x.
\end{equation}
for every $\scale$ and the integrand is supported on $10J$, say.

A \emph{dual function} is a function from $X := \prod_{i=0}^{\dm-1} 10J_{i}$ to $\C$ of the form
\[
x\mapsto\prod_{A\subsetneq\{0,\dots,\dm-1\}}f_{A}(x|_{A}),
\]
where $f_{A} : \prod_{i\in A} 10J_{i}\to\C$ are functions bounded by $1$.
Denote the set of dual functions by $\Sigma$ and apply Theorem~\ref{thm:structure} with $H=L^{2}(X)$, $f=F_{\dm}|_{X}$, the extended seminorm given by Lemma~\ref{lem:Sigma-ext-seminorm} and a function $\eta$ to be chosen later.

To dispose of the $L^{2}$ error term note that at each scale $\scale\leq 0$ the right-hand side of \eqref{eq:tree-single-scale} is bounded by
\[
\int_{10 J} |F_{\dm}(x_{(\dm)})| |\psi_{\scale}(\sum x)| \dif\vec x\\
\lesssim
\|F_{\dm}\|_{L^{1}(10 J_{(\dm)})} \|\psi_{\scale}\|_{1}
\lesssim
\|F_{\dm}\|_{L^{2}(10 J_{(\dm)})}.
\]
The contribution of the uniform term (bounded in $\|\cdot\|_{\Sigma}^{*}$) is estimated by
\[
\sum_{\scale\in \Scales'} \|\hat\psi_{\scale}\|_{1} \Big| \int_{10 J} \prod_{i}F_{i}(x_{(i)}) e(\xi_{\scale}\sum x) \prod_{i=0}^{\dm-1} 1_{J_{i}}(x_{i}) \dif x \Big|
\]
for some choice of frequencies $\xi_{\scale}$.
Note that $\|\hat\psi_{\scale}\|_{1} \lesssim 2^{-\scale}$.
Inside the absolute value, the character splits into a product of functions depending on one variable each.
Since $\dm>1$, each function that depends on only one coordinate $x_{i}$ can be absorbed into one of the functions $F_{i}$, $i<\dm$.
Thus for each fixed $x_{\dm}$ the integral above is a pairing of $F_{\dm}$ with a dual function, and we obtain the estimate
\[
\sum_{\scale\in \Scales'} 2^{-\scale} \| F_{\dm} \|_{\Sigma}^{*}
\lesssim
2^{|\Scales'|} \|F_{\dm}\|_{\Sigma}^{*}.
\]

It remains to treat the structured term.
Suppose $F_{\dm}\in\Sigma$, so that
\[
F_{\dm} = \prod_{i=0}^{\dm-1} f_{i},
\]
where each function $f_{i}$ is bounded by $1$ and does not depend on the $i$-th coordinate.
Substituting this into \eqref{eq:tree-single-scale} we obtain
\[
\int_{10 J} \prod_{i=0}^{\dm-1} (F_{i}f_{i})(x_{(i)}) \psi_{\Scales'}(\sum x) \prod_{i=0}^{\dm-1}1_{J_{i}}(x_{i}) \dif x.
\]
This can be written as
\[
\int_{10 J_{\dm}}\int_{\R^{\dm}} \prod_{i=0}^{\dm-1} (1_{10 J}F_{i}1_{A_{i}} \prod_{j\neq i,\dm} 1_{J_{j}})(x'_{(i)},x_{\dm}) \psi_{\Scales'}(\sum x' + x_{\dm}) \dif x' \dif x_{\dm}.
\]
Changing variable in the inner integral and applying the inductive hypothesis (Theorem~\ref{thm:tiny-gain} with $\dm-1$ in place of $\dm$ and $p_{0}=\dots=p_{\dm-1}=\dm$) we can bound this by
\[
c_{\dm-1}(|\Scales'|) |\Scales'|
\]
with a monotonically decreasing function $c_{\dm-1}$ such that $\lim_{|\Scales'|\to\infty}c_{\dm-1}(|\Scales'|) = 0$.
Summing the contributions of the three terms given by Theorem~\ref{thm:structure} we obtain
\[
\delta |\Scales'| + 2^{|\Scales'|} \eta(C) + C c_{\dm-1}(|\Scales'|) |\Scales'|,
\]
where $C=O_{\delta,\eta}(1)$.
Choose a monotonically increasing function $\tilde S_{\delta,\dm} : \R_{+}\to\N$ such that $a c_{\dm-1}(\tilde S_{\delta,\dm}(a)) \leq \delta$ for all $a$.
Let $\eta(a) := \delta \tilde S_{\delta,\dm}(a)2^{-\tilde S_{\delta,\dm}(a)}$.
Then the claim follows with $|\Scales'|=\tilde S_{\delta,\dm}(C)$.
\end{proof}

\begin{corollary}
\label{cor:single-tree}
Let $\delta>0$ and $S_{\delta,\dm}$ be the number from Proposition~\ref{prop:single-tree}.
Then for every $J\in\DI$ and every interval $\Scales'\subset\Z$ with $\max \Scales'=s(J)$, we have
\[
\big| \sum_{\scale\in \Scales'} \sum_{I\in\DI_{\scale} : I'\subset J'} \Lambda_{I}(F_{0},\dots,F_{\dm}) \big|
\lesssim_{\dm}
2^{\dm s(J)} (\min(|\Scales'|, S_{\delta,\dm}) + \delta \max(|\Scales'|-S_{\delta,\dm},0))
\]
for any functions $F_{0},\dots,F_{\dm}$ bounded by $1$.
\end{corollary}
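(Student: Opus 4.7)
I would prove the corollary by induction on $N := |\Scales'|$, using Proposition~\ref{prop:single-tree} to peel off the top $\leq S := S_{\delta,\dm}$ scales at each step. By multilinearity I first reduce to $F_i:\R^{\dm}\to[0,1]$, writing any complex-valued $F_i$ with $|F_i|\leq 1$ as a linear combination of four $[0,1]$-valued pieces (real$\pm$, imaginary$\pm$); this costs only the constant $4^{\dm+1}$. The base case $N\leq S$ is then immediate: the per-scale bound $\lesssim_\dm 2^{\dm s(J)}$ follows directly from \eqref{eq:tree-single-scale} using $|F_i|\leq 1$ and $\|\psi_\scale\|_1 = O(1)$, so summing yields $\lesssim_\dm N\,2^{\dm s(J)} = \min(N,S)\,2^{\dm s(J)}$.

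For the inductive step $N > S$, I would apply Proposition~\ref{prop:single-tree} to obtain an interval $\tilde\Scales$ with $\max\tilde\Scales = s(J)$ and $|\tilde\Scales|=: n \leq S < N$ on which the restricted tree sum is $\lesssim_\dm \delta\,2^{\dm s(J)}\,n$. Since $\tilde\Scales$ sits at the top of $\Scales'$, the complement $\Scales'\setminus\tilde\Scales$ is an interval of length $N-n$ with maximum scale $s(J)-n$. The plan for these lower scales is to partition $J'$ into its $2^{\dm n}$ dyadic subcubes $K'$ of scale $s(J)-n$; the constraint $\sum m_i = 0$ assigns to each such $K'$ a unique $K\in\DI_{s(J)-n}$, and the identity $\sum_{I\in\DI_\scale,\,I'\subset J'} = \sum_{K'}\sum_{I\in\DI_\scale,\,I'\subset K'}$ exhibits the lower-scale sum as $\sum_K$ of tree sums at top $K$ over an interval of length $N-n$ ending at $s(K)$. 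The inductive hypothesis applied to each $K$, combined via $\sum_K 2^{\dm s(K)} = 2^{\dm s(J)}$, then bounds this contribution by $\lesssim_\dm 2^{\dm s(J)}\bigl(\min(N-n,S) + \delta\max(N-n-S,0)\bigr)$.

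Adding the two contributions gives an upper bound of the form $\lesssim_\dm 2^{\dm s(J)}\bigl(\delta n + \min(N-n,S) + \delta\max(N-n-S,0)\bigr)$, which a short case analysis will show is dominated by $\lesssim_\dm 2^{\dm s(J)}\bigl(S + \delta(N-S)\bigr)$: if $N-n\geq S$, the terms $\delta n$ and $\delta(N-n-S)$ telescope to $\delta(N-S)$; if $N-n<S$, one uses $N-S\leq n$ together with $\delta\leq 1$. The main point requiring care is to choose the induction constant large enough to absorb both the trivial per-scale estimate and the constant from Proposition~\ref{prop:single-tree}; once that is done, the recursion closes without further obstacle. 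The structural reason it works is that Proposition~\ref{prop:single-tree} always produces an interval at the \emph{top} of the scales, which interfaces cleanly with the tree-sum structure on each child $K$ that feeds the inductive hypothesis.
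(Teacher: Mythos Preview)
Your proposal is correct and follows essentially the same route as the paper: induction on $|\Scales'|$, the trivial per-scale bound for the base case, and in the inductive step an application of Proposition~\ref{prop:single-tree} to strip off a top interval $\tilde\Scales$ of length $n\leq S_{\delta,\dm}$, followed by partitioning $J'$ into its scale-$(s(J)-n)$ children and recursing. The paper's proof is simply terser---it does not spell out the reduction to $[0,1]$-valued functions (handled there by a standing assumption from the introduction) or the closing case analysis---but the structure is identical.
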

\begin{proof}
By induction on $|\Scales'|$.
For $|\Scales'|\leq S_{\delta,\dm}$ the estimate follows from $|F_{i}|\leq 1$ and $\|\psi_{\scale}\|_{1}\lesssim 1$.

If $|\Scales'|> S_{\delta,\dm}$, then by Proposition~\ref{prop:single-tree} we can find a final interval $\Scales'' \subset \Scales'$ such that the sum over $\Scales''$ can be estimated by $2^{\dm s(J)}\delta |\Scales''|$.
The remaining part of the sum splits into sums over subintervals of scale $s(J)-|\Scales''|$, and to those we apply the Corollary with $\Scales'\setminus \Scales''$ in place of $\Scales'$.
\end{proof}

\section{Tree selection}
\label{sec:selection}
For cubes $I\in\DI_{\scale}$ write
\[
a_{I} := 2^{-\scale \dm}\Lambda_{I}(F_{0},\dots,F_{\dm})
\]
The integrand in the definition of $\Lambda_{I}$ vanishes outside $10 I$, say, and by the Loomis--Whitney inequality we can estimate
\[
|a_{I}| \lesssim \prod_{i=0}^{\dm} \min_{\pi_{\Delta} I} M_{\dm}F_{i},
\]
where $\pi_{\Delta} I$ is the subset of the diagonal $\Delta = \{ x\in\R^{\dm+1} : \sum x = 0\}$ consisting of the points whose first $\dm$ coordinates lie in $I'$ and $M_{\dm}$ if the maximal function $M_{\dm}F(x) = \sup_{Q\ni x} (|Q|^{-1}\int_{Q} |F|^{\dm})^{1/\dm}$.
Raising this to a power $\alpha$ and summing over the squares $I$ of a given size we obtain
\[
\sum_{I\in\DI_{\scale}} |a_{I}|^{\alpha} 2^{\scale \dm}
\lesssim
\int_{\Delta} \prod_{i=0}^{\dm} M_{\dm}F_{i}(x_{(i)})^{\alpha}.
\]
By Hölder's inequality this is bounded by
\[
\prod_{i=0}^{\dm} \| (M_{\dm}F_{i})^{\alpha} \|_{p_{i}}
=
\prod_{i=0}^{\dm} \| M_{\dm}F_{i} \|_{\alpha p_{i}}^{\alpha}
\lesssim
\prod_{i} |E_{i}|^{1/p_{i}}
\]
provided $\alpha p_{i}>\dm$ for all $i$.
It follows from $p_{i}>\dm$ that 
\begin{equation}
\label{eq:small-tiles}
\sum_{\scale\in \Scales} \sum_{I : |a_{I}| < \delta, l(I)=\scale} |a_{I}| 2^{\dm\scale}
\leq
\sum_{\scale\in \Scales} \sum_{I : l(I)=2^{\scale}} |a_{I}|^{\alpha} \delta^{1-\alpha} 2^{\dm\scale}
\lesssim_{\alpha}
\delta^{1-\alpha} |\Scales| \prod_{i} |E_{i}|^{1/p_{i}}
\end{equation}
for every $\alpha$ sufficiently close to $1$ and every $\delta>0$.

Let now $\mathcal{J}\subset\DI$ be the collection of maximal cubes $J$ with $|a_{J}|>\delta$.
The union of these cubes cannot be too large.
Indeed, we have
\[
\pi_{\Delta} \bigcup\{J : |a_{J}|>\delta\}
\subset
\Delta \cap \{ \prod_{i=0}^{\dm} M_{\dm}1_{E_{i}} \gtrsim \delta \}.
\]
The measure of the latter set is bounded by
\[
\delta^{-1} \|\prod_{i=0}^{\dm} M_{\dm}1_{E_{i}}\|_{L^{1}(\Delta)}
\leq
\delta^{-1} \prod_{i=0}^{\dm} \| M_{\dm}1_{E_{i}}\|_{L^{p_{i}}(\Delta)}
\lesssim
\delta^{-1} \prod_{i=0}^{\dm} | E_{i} |^{1/p_{i}},
\]
where we have again used $p_{i}>\dm$.
Let $S = S_{\delta^{2},\dm}$ be the number given by Proposition~\ref{prop:single-tree} with $\delta^{2}$ in place of $\delta$.
For those $J\in\mathcal{J}$ with $s(J)> \min \Scales + \delta^{-2} S$ by Corollary~\ref{cor:single-tree} we have
\[
\big| \sum_{I\in\DI_{\Scales} : I'\subset J'} \Lambda_{I}(F_{0},\dots,F_{\dm}) \big|
\lesssim_{\dm}
2^{\dm s(J)} \delta^{2} |\Scales|.
\]
In particular,
\[
\sum_{J\in\mathcal{J} : s(J)> \min \Scales + \delta^{-2} S}
\big| \sum_{I\in\DI_{\Scales} : I'\subset J'} \Lambda_{I}(F_{0},\dots,F_{\dm}) \big|
\lesssim_{\dm}
\delta^{2} |\Scales| \Big| \pi_{\Delta} \bigcup\{J : |a_{J}|>\delta\} \Big|
\lesssim
\delta |\Scales| \prod_{i} |E_{i}|^{1/p_{i}}.
\]
On the other hand, by \eqref{eq:small-tiles} with $\alpha=0$ we have
\[
\sum_{J\in\mathcal{J} : s(J) \leq \min \Scales + \delta^{-2} S} \sum_{I\in\DI_{\Scales} : I'\subset J'} |\Lambda_{I}(F_{0},\dots,F_{\dm})|
\lesssim
\delta^{-2} S \prod_{i} |E_{i}|^{1/p_{i}}.
\]
Summing the above contributions we obtain the claim of Theorem~\ref{thm:tiny-gain} (in the case of characteristic functions).

\appendix
\section{Maximally modulated multilinear Hilbert transform}
\label{sec:max-mod}
In this appendix we show how to encode some one-dimensional multilinear operators in the simplex Hilbert transform using the ideas from \cite[Appendix B]{arxiv:1506.00861}.
Consider the family of multilinear forms
\begin{equation}
\label{eq:HHT-def-beta}
\Lambda_{\vec\beta_{0},\dots,\vec\beta_{\dm}}(F_{0},\dots,F_{\dm})
:=
\int_{\R^{\dm}} \int_{\R} \prod_{i=0}^{\dm}F_i(\vec x-\vec\beta_{i}t) K(t)\dif t \dif \vec x,
\end{equation}
where $\vec\beta_{i}\in\R^{\dm}$ are in general position.
The main observation is that
\begin{equation}
\label{eq:HHT-beta-hom}
\|\Lambda_{K}\|_{L^{p_{0}} \times \dots \times L^{p_{\dm}}}
=
|\det B|^{1/p_{0}+\dots+1/p_{\dm}-1}
\|\Lambda_{\vec\beta_{0},\dots,\vec\beta_{\dm}}\|_{L^{p_{0}} \times \dots \times L^{p_{\dm}}},
\end{equation}
where
\[
B :=
\begin{pmatrix}
1 & \dots & 1\\
\vec\beta_{0} & \dots & \vec\beta_{\dm}
\end{pmatrix}.
\]
In particular, the norm of \eqref{eq:HHT-def-beta} does not depend on the $\vec\beta_{i}$'s provided $\sum_{i=0}^{\dm}1/p_{i}=1$.

\begin{proof}[Proof of \eqref{eq:HHT-beta-hom}]
Consider the change of variables
\[
\begin{pmatrix}
t\\ \vec u
\end{pmatrix}
=
B \vec x,
\quad
\quad
\vec u\in\R^{\dm},
\vec x\in\R^{\dm+1}.
\]
If $\pi_i:\R^{\dm+1}\to\R^{\dm}$ denotes omission of the $i$-th coordinate, then for arbitrary functions $F_0,\dots,F_\dm$ we have
\begin{align*}
\Lambda(F_{0},\dots,F_{\dm})
&=
\int_{\R^{\dm+1}} \prod_{i=0}^{\dm}F_{i}(\pi_i(\vec x)) K(\sum \vec x) \dif \vec x\\
&=
|\det B|^{-1}
\int_{\R\times\R^{\dm}}
\prod_{i=0}^{\dm} F_{i}(\pi_i B^{-1}(t,\vec u))
K(t) \dif t \dif \vec u\\
&=
|\det B|^{-1}
\int_{\R\times\R^{\dm}}
\prod_{i=0}^{\dm} \tilde F_{i}(\vec u-\vec{\beta_{i}}t)
K(t) \dif t \dif \vec u\\
&=
|\det B|^{-1} \Lambda_{\vec\beta_{0},\dots,\vec\beta_{\dm}}(\tilde F_{0}, \dots, \tilde F_{\dm}),
\end{align*}
where
\begin{equation}\label{eq:fn-substitution}
\tilde F_{i}(\vec u) := F_{i}(\pi_i B^{-1}(0,\vec u)).
\end{equation}
Here we have used the fact that
\[
\pi_i B^{-1}
\begin{pmatrix}
1 \\ \vec\beta_{i}
\end{pmatrix}
=
0.
\]
The important observation is now that
\[
\|\tilde F_{i}\|_{p_{i}} = |\det B|^{1/p_{i}} \|F_{i}\|_{p_{i}}.
\]
Indeed, the change of variables in the definition of $\tilde F_{i}$ is given by the $\dm\times \dm$ submatrix of $B^{-1}$ obtained by crossing out the first column and the $i$-th row.
By Cramer's rule the determinant of that submatrix equals $\det B^{-1}$ times the $(1,i)$-th entry of $B$, up to the sign.
Since the latter entry of $B$ is $1$, the determinant of the change of variables is $\pm(\det B)^{-1}$.
The ratio of $L^{p_{i}}$ norms equals the absolute value of the determinant to the power $-1/p_{i}$, as required.
The claim now follows after taking a supremum over the $F_{i}$'s.
\end{proof}

Now we encode the maximally modulated multilinear Hilbert transform \eqref{eq:max-mod-MHT} in \eqref{eq:HHT-def-beta}.
Let $\vec\beta_{0}\in\R^{\dm}$ be the origin, $\vec\beta_{j}=e_{j}$ for $j=1,\dots,d$, $\vec\beta_{d+1}=b_{d+1}e_{d+1}$, and $\vec\beta_{j}=b_{j}e_{d+1}+\epsilon e_{j}$ for $j=d+2,\dots,\dm$.
The identity
\[
\sum_{j=0}^{k} (-1)^{k-j} \binom{k}{j} j^m
=
\begin{cases}
0 & \text{ for } m=0,1,\ldots,k-1,\\
k! & \text{ for } m=k
\end{cases}
\]
can be shown by induction on a positive integer $k$ and its immediate consequence is
\[
\sum_{j=0}^{d} \sum_{k=\max\{j,1\}}^{d} (-1)^{j}\frac{1}{k!}\binom{k}{j}N_k(x_{d+1})\Big(\sum_{l=1}^{k}l x_l-jt\Big)^k
=
\sum_{k=1}^{d} N_k(x_{d+1}) t^k .
\]
It follows that with
\[
F_j(x_1,\ldots,x_\dm)
=
g_j(x_{d+1}) \prod_{k=\max\{j,1\}}^{d} \exp\bigg(i(-1)^{j}\frac{1}{k!}\binom{k}{j}N_k(x_{d+1})\Big(\sum_{l=1}^{k}l x_l\Big)^k\bigg)
\]
for $j=0,1,\ldots,d$ and
\[
F_{j}(x_1,\ldots,x_\dm) = f_{j}(x_{d+1})
\]
for $j=d+1,\dots,\dm$ the form \eqref{eq:HHT-def-beta} formally becomes
\[
\int g_0 g_1\cdots g_{d} C_{N_1,\ldots,N_{d}}(f_{d+1},\dots,f_{\dm}).
\]
To be precise we should multiply each $F_{j}$ by $\prod_{j\neq d+1}\phi(x_{j})$, where $\phi$ is a fixed smooth non-negative cutoff function, and then let $\epsilon\to 0$.

\begin{ack}
I thank Terence Tao and Vjekoslav Kova\v{c} for useful discussions about the triangular Hilbert transform and Christoph Thiele for persistent questions that helped to streamline the argument.
\end{ack}

\printbibliography
\end{document}
